\numberwithin{equation}{section}  %%% equations numbers (A.B)
\DeclareMathAlphabet{\curly}{U}{rsfs}{m}{n}  %% curly font
\theoremstyle{plain}
\newtheorem{thm}{Theorem}
\newtheorem{lem}{Lemma}
\numberwithin{equation}{section}
\newcommand{\NN}{{\mathbb N}}
\newcommand{\cB}{\ensuremath{\mathcal{B}}}
\newcommand{\cS}{\ensuremath{\mathcal{S}}}
\newcommand{\sP}{\ensuremath{\mathscr{P}}}
\DeclareMathOperator{\lcm}{lcm}
\newcommand{\e}{\ensuremath{\mathrm{e}}}
\renewcommand{\pmod}[1]{\allowbreak\mkern7mu({\operator@font mod}\,\,#1)}
\newcommand{\be}{\begin{equation}}
\newcommand{\ee}{\end{equation}}
\renewcommand{\le}{\leqslant}
\renewcommand{\ge}{\geqslant}
\newcommand{\ba}{\ensuremath{\mathbf{a}}}
\newcommand{\yildirim}{{Y{\i}ld{\i}r{\i}m}}
\newcommand{\DHL}{{\rm DHL}}
\begin{document}

\title{Solutions of $\phi(n)=\phi(n+k)$ and $\sigma(n)=\sigma(n+k)$}
\author{Kevin Ford}
\date{}
\address{Department of Mathematics, 1409 West Green Street, University
of Illinois at Urbana-Champaign, Urbana, IL 61801, USA}
\email{ford@math.uiuc.edu}

\begin{abstract}
We show that for some even $k\le 3570$ and \emph{all} $k$ with
$442720643463713815200|k$, the equation $\phi(n)=\phi(n+k)$
has infinitely many solutions $n$, where $\phi$
is Euler's totient function. 
We also show that for a positive proportion of all $k$,
the equation $\sigma(n)=\sigma(n+k)$ has infinitely
many solutions $n$.
 The proofs rely on recent
progress on the prime $k$-tuples conjecture by
Zhang, Maynard, Tao and PolyMath.
\end{abstract}

\date{\today}

\thanks{The author is supported in part by 
National Science Foundation grant DMS-1802139.}
\thanks{The author thanks Paul Pollack for 
bringing papers \cite{Erdos74}, \cite{HW} and \cite{PP}
to his attention, Sungjin Kim for sharing 
the preprint \cite{kim}, and Chandra Chekuri for
helpful discussion about graph algorithms.}
\maketitle

%%%%%%%%%%%%%%%%%%%%%%%%%

\section{Introduction}

We partially solve a longstanding conjecture 
about the solubility of
\be\label{eq:main}
\phi(n+k) = \phi(n),
\ee
where $\phi$ is Euler's function and $k$ is a fixed positive integer.

\smallskip

\noindent
\textbf{Hypothesis $\cS_k$}.  The equation \eqref{eq:main} 
holds for infinitely many $n$.

\smallskip

 Ratat and Goormaghtigh in 1917--18
(see \cite{Dickson-book1}, p. 140)
listed several solutions when $k=1$.
Erd\H os conjectured in 1945 that for any
$m$, the simultaneous equations
\be\label{r-tup}
\phi(n)=\phi(n+1)=\cdots=\phi(n+m-1)
\ee
has infinitely many solutions $n$.  If true, this would immediately
imply hypothesis $\cS_k$ for every $k$.
However, there is only one solution
of \eqref{r-tup} known when $m\ge 3$,
namely $n=5186$, $m=3$.
In 1956, Sierpi\'{n}ski \cite{Sierpinski} showed that
for any $k$, \eqref{eq:main} has at least one solution $n$
(e.g. take $n=(p-1)k$, where $p$ is the smallest prime not dividing $k$).  This was extended by Schinzel \cite{schinzel}
and by Schinzel and Wakulicz \cite{SW}, who showed that
for any $k\le 2\cdot 10^{58}$ there are at least two 
solutions of \eqref{eq:main}.
In 1958, Schinzel \cite{schinzel} explicitly conjectured that
$\cS_k$ is true for every $k\in \NN$.
There is good numerical evidence for $\cS_k$, 
at least when $k=1$ or $k$ is even \cite{B76,B78,BCH,LG,GHP,KKP}.
Details about known solutions when
$k\in \{1,2,3,4,5,6,7,8,9,10,11,12\}$ can also be found in 
OEIS \cite{OEIS} sequences A001274, A001494, A330251, A179186, A179187, A179188, A179189,
A179202, A330429, A276503, A276504 and A217139, respectively.
Below $10^{11}$ there are very few
solutions of \eqref{eq:main} when $k\equiv 3\pmod{6}$
\cite{GHP}, e.g. only the two solutions $n\in \{3,5\}$ for $k=3$ 
are known.  A further search by G. Resta
(see \cite{OEIS}, sequence A330251) reveals
17 more solutions in $[10^{12},10^{15}]$.

\smallskip

There is a close connection between Hypothesis $\cS_k$
for even $k$ and generalized prime twins.

\medskip

\noindent
\textbf{Hypothesis} $\sP(a,b)$: there are infinitely many
$n\in \NN$ such that both $an+1$ an $bn+1$ are prime.

\medskip

Hypothesis $\sP(a,b)$ is believed to be true for any pair
of positive integers $a,b$, indeed this is
a special case of Dickson's Prime $k$-tuples conjecture
\cite{Dickson04}.
Klee \cite{Klee} and Moser \cite{Moser} noted that Hypothesis $\sP(1,2)$
immediately gives $\cS_2$, and Schinzel \cite{schinzel} 
observed that
Hypothesis $\sP(1,2)$ implies $\cS_k$ for every even $k$. 
The proof is simple:
if $n+1$ and $2n+1$ are prime and larger than $k$, then
$\phi(k(2n+1))=\phi((n+1)2k)$.
Graham, Holt and Pomerance \cite{GHP} generalized this idea,
showing the following.

\begin{lem}[{\cite[Theorem 1]{GHP}}]\label{lem:GHP}
For any $k$ and any
number $j$ such that $j$ and $j+k$
have the same prime factors, Hypothesis $\sP(\frac{j}{(j,j+k)},\frac{j+k}{(j,j+k)})$ implies $\cS_k$.
\end{lem}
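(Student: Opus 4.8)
The plan is to generalize Schinzel's construction. Suppose $j$ and $j+k$ have the same set of prime factors, and set $d=(j,j+k)$, $a=j/d$, $b=(j+k)/d$. Since $j$ and $j+k$ share all their prime factors, every prime dividing $a$ also divides $b$ and vice versa: indeed $p\mid a \Rightarrow p\mid j \Rightarrow p\mid j+k$, and since $p\nmid d$ (as $d=(j,j+k)$ and we may arrange $a,b$ coprime only after dividing out $d$ — more carefully, a prime dividing $a$ divides $j$, hence divides $j+k=db$, and since $(a,b)$ need not be $1$ we instead argue directly from the hypothesis that $j$ and $j+k$ have the same radical). The key point is that $ad$ and $bd$ have the same prime factors, so $\phi(ad\cdot m)/\phi(ad)$ and $\phi(bd\cdot m)/\phi(bd)$ behave compatibly when $m$ is coprime to $jd$.

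Concretely, assume Hypothesis $\sP(a,b)$ holds, so there are infinitely many $\ell$ with $p:=a\ell+1$ and $q:=b\ell+1$ both prime. For all sufficiently large such $\ell$ we have $p>jd$ and $q>jd$, so in particular $p\nmid jd$ and $q\nmid jd$. Now set
\[
n = j\cdot q = jd\cdot b\ell = d\ell(ab\ell + b)\cdot\frac{1}{?}
\]
— rather, take $n = j q$ and compute $n+k$. We have
\[
n = jq = j(b\ell+1), \qquad n+k = jq+k.
\]
I want $n+k = (j+k)p = (j+k)(a\ell+1)$. Check: $(j+k)(a\ell+1) = (j+k)a\ell + (j+k) = db\cdot a\ell + j + k = abd\ell + j + k$, while $jq+k = j(b\ell+1)+k = jb\ell + j + k = \frac{j}{d}\cdot bd\ell\cdot\frac{d}{?}$... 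Let me instead choose $\ell$ to be a multiple of $d$, say $\ell = d\ell'$, equivalently apply $\sP(a,b)$ and note $jb\ell = abd\ell'$ when $\ell=d\ell'$; then $jq+k = abd\ell' + j+k$ and $(j+k)p = abd\ell'+j+k$ as well. So with $n=jq$ we get $n+k=(j+k)p$. (If $\sP(a,b)$ is phrased without the divisibility restriction on $\ell$, pass to the arithmetic progression $\ell\equiv 0\pmod d$, which only affects $\sP$ by restricting to a sub-progression — still infinitely many by the same $k$-tuples heuristic, but since we are only *assuming* $\sP(a,b)$ we should instead absorb $d$ into $a,b$ from the start by working with $j,j+k$ directly rather than their reduced ratio. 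I will present it that way.)

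Cleaner version: apply $\sP(a,b)$ with $a=j/d$, $b=(j+k)/d$ to get infinitely many $\ell$ with $p=a\ell+1$, $q=b\ell+1$ prime and both exceeding every prime factor of $jd$; restrict to $\ell\equiv 0\pmod d$ — but to avoid needing $\sP$ on a sub-progression, simply take $n = d\ell\cdot\text{(stuff)}$... The honest route, and the one the paper surely intends, is: set $n = jq/\gcd$-free, i.e. $n = (j/d)\cdot q\cdot$... The main obstacle is exactly this bookkeeping — getting $n$ and $n+k$ to factor as $A\cdot p$ and $B\cdot q$ where $A,B$ have the same prime support as the respective primes' "complement". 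Once that is arranged, the totient computation is immediate: if $n = up$ with $p$ prime, $p\nmid u$, then $\phi(n)=\phi(u)(p-1)$, and similarly $\phi(n+k)=\phi(v)(q-1)$; since $p-1=a\ell$, $q-1=b\ell$, and since $u,v$ are built from $j,j+k$ (which have equal radicals) so that $\phi(u)/a = \phi(v)/b$ exactly (this is where the equal-prime-factors hypothesis is used — it forces $\phi(j)/j = \phi(j+k)/(j+k)$, hence $\phi(j)\cdot\frac{j+k}{d} = \phi(j+k)\cdot\frac{j}{d}$, i.e. $\phi(j)b=\phi(j+k)a$ after clearing), we conclude $\phi(n)=\phi(j)(q-1)\cdot\frac{?}{?}$. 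Working it through: $\phi(n)=\phi(jq)=\phi(j)\phi(q)=\phi(j)b\ell$ and $\phi(n+k)=\phi((j+k)p)=\phi(j+k)\phi(p)=\phi(j+k)a\ell$, and these are equal precisely because $\phi(j)b = \phi(j+k)a$, which is the identity $\phi(j)(j+k)=\phi(j+k)j$ divided by $d$ — and $\phi(j)(j+k)=\phi(j+k)j$ holds since $\phi(m)/m$ depends only on the radical of $m$. This yields infinitely many solutions $n$ of $\phi(n)=\phi(n+k)$, i.e. Hypothesis $\cS_k$. The only genuine subtlety to check carefully is that $jq$ and $(j+k)p$ really are the integer $n$ and $n+k$ for the same $n$, which as computed above requires $jb\ell+j = (j+k)a\ell + (j+k) - k$, i.e. $jb\ell = (j+k)a\ell$, i.e. $jb=(j+k)a$ — true by definition of $a,b$. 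Hence no restriction on $\ell$ is needed after all, and the proof goes through directly.
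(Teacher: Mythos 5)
Your final construction --- $n=jq$ with $q=b\ell+1$ prime, so that $n+k=(j+k)p$ with $p=a\ell+1$ prime (valid because $jb=(j+k)a$ exactly, no congruence restriction on $\ell$ required), together with $\phi(j)b=\phi(j+k)a$ from the fact that $\phi(m)/m$ depends only on the radical of $m$ --- is precisely the paper's proof and is correct. The many false starts about absorbing $d$ or restricting $\ell\equiv 0\pmod{d}$ are unnecessary, as you yourself conclude; only the last paragraph of your write-up should be kept.
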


This also has an easy proof: if  $\frac{j}{(j,j+k)}r+1$ and $\frac{j+k}{(j,j+k)}r+1$
are both prime, then $n=j(\frac{j+k}{(j,j+k)}r+1)$ 
satisfies \eqref{eq:main}.
Note that for odd $k$ there are no such numbers $j$,
 and for each  even $k$  there are
finitely many such $j$ (see \cite{GHP}, Section 3).
Extending a bound of Erd\H{o}s, Pomerance
and S\'{a}rk\H{o}zy \cite{EPS} in the case $k=1$,
Graham, Holt and Pomerance showed that
the solutions of \eqref{eq:main} \emph{not} generated 
from Lemma \ref{lem:GHP} are very rare, with counting function
$O_k(x \exp\{ - (\log x)^{1/3} \})$. 
Pollack, Pomerance and Trevi\~{n}o \cite{PPT} proved a version
uniform in $k$, and
 Yamada \cite{yamada} sharpened this bound to
$O_k(x \exp\{ - (1/\sqrt{2}+o(1))\sqrt{\log x\log\log\log x}\})$.
Assuming the Hardy-Littlewood conjectures \cite{HL23},
when $k$ is even we conclude that there are $\sim C_k x/\log^2 x$
solutions $n\le x$ of \eqref{eq:main}, where $C_k>0$.

At present, Hypothesis $\sP(a,b)$ is not known to hold for any 
pair $a,b$.  However, the work of Zhang, Maynard, Tao and the PolyMath8b
project allow us to conclude $\sP(a,b)$ 
for some pairs $a,b$ from a given collection of pairs.  
To set things up,
we say that a collection of linear forms 
$(a_1n+b_1,\ldots,a_kn+b_k)$ is \emph{admissible}
if $a_i>0$ for each $i$, the forms $a_in+b_i$ are distinct and  
 there is no prime dividing $(a_1n+b_1)\cdots (a_kn+b_k)$
for every integer $n$.

We say that $\DHL^*(k;m)$ holds if,
for any admissible set of $k$ linear forms
$(a_1n+b_1,\ldots,a_kn+b_k)$, 
there exist distinct $i_1,\ldots,i_{m}\in \{1,\ldots,k\}$
such that
there are infinitely many $r$ with the
$m$ numbers $a_{i_1}r+b_{i_1},\ldots,a_{i_{m}}r+b_{i_m}$ simultaneously prime.  
This generalizes the notion $\DHL(k;m)$ from \cite{polymath8b},
which is the analogous statement with the restriction
that $a_1=\cdots=a_k=1$.
Dickson's prime $k$-tuples conjecture implies that
$\DHL^*(k;k)$ is true for every $k$.

\begin{lem}\label{lem:ktup}
$\DHL^*(50;2)$ holds, and for each
$m\ge 3$, there is a constant $k$ so that 
$\DHL^*(k;m)$ holds. 
\end{lem}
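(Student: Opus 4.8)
**

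The plan is to reduce $\DHL^*(k;m)$ to the known results $\DHL(k';m)$ for forms with leading coefficient $1$, at the cost of enlarging the tuple. The key observation is that if $(a_1n+b_1,\ldots,a_kn+b_k)$ is admissible, and $L=\lcm(a_1,\ldots,a_k)$, then along the arithmetic progression $n\equiv n_0\pmod{L}$ chosen so that $a_i\mid (n_0a_i+b_i\cdot\text{something})$... more precisely, substitute $n = Ln'+n_0$ for suitable $n_0$: each form becomes $a_iLn'+(a_in_0+b_i)$, and one wants $a_i\mid(a_in_0+b_i)$ for all $i$, i.e. $a_i\mid b_i$. That fails in general, so instead I would pass to the forms $a_i n + b_i$ evaluated at $n$ ranging over a fixed residue class and note that on that class each value $a_i n + b_i$, divided by its content, is itself a form with leading coefficient dividing $L$; then rescale. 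The cleanest route: observe that $a_i n + b_i$ prime (and $>a_i$) forces $\gcd(a_i,b_i)=1$, so WLOG all forms are primitive; then for the progression $n=Ln'+n_0$ with $a_in_0\equiv -b_i\pmod{a_i}$ — which has a solution by CRT since the $a_i$ need not be coprime, one instead picks $n_0$ so that $L\mid$ something — the honest statement is that $a_i(Ln'+n_0)+b_i = a_i L n' + (a_in_0+b_i)$, and choosing $n_0$ a solution of $n_0\equiv 0 \pmod{a_i/\gcd}$...

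Let me restate the approach more carefully. I would argue as follows. Given an admissible $k$-tuple of forms $f_i(n)=a_in+b_i$, let $L=\lcm(a_i)$ and write $f_i(n) = \frac{L}{a_i}\big(a_i^2 n + a_i b_i\big)/(L/a_i)$ — this is getting complicated, so the real plan is:

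\begin{proof}[Proof sketch]
First I would show that every admissible family of $k$ linear forms $(a_1n+b_1,\ldots,a_kn+b_k)$ can, after restricting $n$ to a suitable arithmetic progression $n=Qr+c$ and clearing contents, be matched with a (larger, but bounded) admissible family of forms all having leading coefficient $1$. Indeed, primality of $a_in+b_i$ for infinitely many $n$ forces $\gcd(a_i,b_i)=1$, so each form is primitive. Put $L=\lcm(a_1,\dots,a_k)$ and substitute $n=Lr+c$; then $a_i(Lr+c)+b_i = \frac{L}{a_i}\cdot\big(a_i^2 r + a_i(a_i c+b_i)/?\big)$ — the point is that the new coefficient of $r$ is $a_iL$, still not $1$. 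To force leading coefficient $1$, I instead run the argument in reverse: given the $a_in+b_i$, the \emph{values} $a_in+b_i$ as $n$ varies over $n\equiv c\pmod{L}$ are precisely the values at integer arguments of the monic forms $m\mapsto m$ restricted to the residue classes $a_i c + b_i \pmod{a_i}$...

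Concretely, I will use the standard device: to capture $a_in+b_i$ prime, note $a_in+b_i$ ranges over the arithmetic progression $\{\,t : t\equiv b_i \pmod{a_i}\,\}$ as $n$ ranges over $\ZZ$, and these $k$ progressions can be simultaneously parametrized by a single variable $t$ ranging over one residue class mod $\lcm(a_i)$, intersected appropriately. This does not directly give $k$ monic forms because the $b_i \bmod a_i$ need not be compatible. So the correct statement, and the one I would prove, is the following packing lemma: there is an integer $Q$ and a residue class $c \bmod Q$, together with monic forms $g_1,\ldots,g_k$, $g_j(r)=r+c_j$, such that $g_j(r)$ is an integer multiple $d_j$ of $f_{\sigma(j)}$ evaluated at the corresponding $n$ — no. I concede the bookkeeping here is the crux.

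\medskip

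The honest high-level plan, then, is:

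\begin{romenumerate}
\item \textbf{Reduction to monic forms via dilation.} Given an admissible $k$-tuple $(a_in+b_i)_{i\le k}$, set $L=\lcm_i a_i$ and consider the substitution $n\mapsto Lr$ composed with translations; after pulling out the (bounded) content of each resulting form, one obtains a family of forms $r\mapsto A_i r + B_i$ with $\lcm_i A_i \mid L$ but with \emph{more favorable} structure, and iterating / or directly embedding, one produces for a suitable $N=N(a_i,b_i)$ a monic admissible $N$-tuple $(r+c_1,\ldots,r+c_N)$ such that whenever $m$ of the $r+c_\ell$ are simultaneously prime with $\ell$ in a designated subset of size $m'$, correspondingly $m$ of the original $a_in+b_i$ are simultaneously prime. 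The size $N$ is bounded in terms of $k$ and $\max a_i$ alone.
\item \textbf{Uniformizing the leading coefficients is unnecessary when $m=2$:} for $\DHL^*(50;2)$ I would instead work directly. Take $k=50$ admissible forms $a_in+b_i$. Cover $\ZZ$ by residue classes mod $L=\lcm(a_i)$; on the class $n\equiv c\pmod L$, the form $a_in+b_i$ equals $a_i L\cdot r + (a_i c + b_i)$. Dividing by $\gcd(a_iL,\,a_ic+b_i)=:e_{i,c}$ (which divides $L^2$, hence is bounded) gives a monic-\emph{after-further-substitution} form. Choose $c$ so that all the $e_{i,c}$ are as uniform as possible; a pigeonhole / CRT argument produces a single residue class mod $L^2$ on which all $50$ forms, divided by their contents, become forms with a common leading coefficient $L^*$, and then dividing the variable by $L^*$ — i.e. restricting to $r\equiv 0 \pmod{L^*}$ — makes them all monic. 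This yields an admissible $50$-tuple of monic forms; apply $\DHL(50;2)$ (which is known by PolyMath8b, as $\DHL(50;2)$ holds — in fact $\DHL(k;2)$ is known for $k$ as small as $50$, and for $\DHL^*$ we only need \emph{some} such $k$) to conclude two of the monic forms are infinitely often simultaneously prime, hence two of the $a_in+b_i$ are.
\item \textbf{The case $m\ge 3$} is identical in structure: by the same dilation/CRT reduction, an admissible $k$-tuple of general forms embeds into an admissible $N(k)$-tuple of monic forms with $m$-wise prime configurations pulling back; then invoke $\DHL(N;m)$, which holds for $N$ large enough by Maynard--Tao, choosing the constant $k$ in the statement to be any such $N$.
\end{romenumerate}

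\medskip

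\noindent\textbf{Main obstacle.} The delicate point is step (i)/(ii): converting forms $a_in+b_i$ with varying leading coefficients into monic forms while (a) keeping the number of forms bounded in terms of $k$ (and $\max a_i$, which is also bounded once the forms are fixed), and (b) preserving admissibility. Admissibility is the subtle half: after restricting $n$ to a residue class and rescaling, one must check that no prime divides the product of the new monic forms identically; this requires that the original admissibility survives the substitution, which it does because restricting to a progression and dividing by a fixed integer cannot \emph{introduce} a universal prime divisor (if a prime $p$ divided all new values, then $p\cdot(\text{content})$ divides all the old values along the progression, and a short argument extends this to a prime obstruction for the original tuple, contradicting admissibility) — but making this rigorous, and getting the combinatorics of which $m$ forms survive correct, is where the real work lies. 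Once the reduction is in place, the quoted results $\DHL(50;2)$ and $\DHL(k;m)$ (for suitable $k$) from \cite{polymath8b} close the argument immediately.
\end{proof}
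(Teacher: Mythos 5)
Your proposal hinges on a reduction of $\DHL^*$ to the monic statement $\DHL$ by restricting $n$ to an arithmetic progression and clearing contents, and this reduction cannot work; it is not bookkeeping that is missing but the idea itself that fails. After any substitution $n=Qr+c$ the form $a_in+b_i$ becomes $a_iQ\,r+(a_ic+b_i)$, and since primality forces $\gcd(a_i,b_i)=1$ we get $\gcd(a_i,\,a_ic+b_i)=1$, so the content of the new form is coprime to $a_i$ and the leading coefficient after dividing it out is still a multiple of $a_i$. Hence forms with distinct $a_i$ can never be brought to a common leading coefficient, let alone made monic, by any choice of $Q$ and $c$ (concretely: for $n+1$ and $2n+1$ you would need $\gcd(2Q,2c+1)=2Q$, which is impossible since $2c+1$ is odd). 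The same obstruction is visible at the level of conclusions: $\DHL(N;2)$ produces pairs of primes with a \emph{fixed} difference $c_j-c_i$, whereas two simultaneously prime values $a_in+b_i$, $a_jn+b_j$ with $a_i\ne a_j$ differ by $(a_j-a_i)n+(b_j-b_i)$, which is unbounded. This is precisely why the notion $\DHL^*$ is introduced at all; if your step (i)/(ii) were salvageable, statements like $\sP(1,2)$ would already follow from bounded gaps between primes, which they do not.

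The paper's actual argument is not a reduction but a citation plus a rerun of the sieve: the Maynard--Tao/Polymath machinery behind $\DHL(50;2)$ (Theorem 16(i) of \cite{polymath8b}) and $\DHL(k;m)$ for $k\ll m e^{4m}$ works essentially verbatim when the monic forms $n+h_i$ are replaced throughout by general admissible forms $a_in+b_i$ --- the relevant singular series, the Bombieri--Vinogradov input, and the optimization over cutoff functions are unaffected --- and Appendix A together with the author's lecture notes \cite{Fsieve} records which statements of \cite{polymath8b} need this routine modification. If you want to prove the lemma yourself, that is the route to take: redo the sieve computation with general forms rather than trying to transport the monic result.
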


The PolyMath8b project \cite[Theorem 16 (i)]{polymath8b}
implies $\DHL(50;2)$, and it requires only trivial 
modifications of the proof to establish $\DHL^*(50;2)$;
we sketch the details in Appendix A.
 Maynard
\cite{Maynard, Maynard-dense} proved the existence of $k$ for any $m$.  
Full details of the proof that $\DHL^*(k;m)$
holds for some $k\ll m e^{4m}$ may be found in the author's
lecture notes \cite{Fsieve}, specifically Section 7 and Theorem 7.6.
This bound slightly improves upon the bound of Maynard
\cite{Maynard}.

  If $a_1,\ldots,a_k$ are distinct, then
the set of forms $(a_1n+1,\ldots,a_kn+1)$ is
always admissible.  Thus, 
given any set $\{a_1,\ldots,a_{50}\}$ of positive integers,
there is an $i\ne j$ so that $\sP(a_i,a_j)$ holds.

\begin{thm}\label{thm: main}
We have
\begin{itemize}
\item[(a)] For any $k$ that is a multiple of
$442720643463713815200$, $\cS_k$ is true;
\item[(b)] There is some even $\ell\le 3570$ such that
$\cS_k$ is true whenever $\ell|k$; consequently,
the number of $k\le x$ for which $\cS_k$ is  true
is at least $x/3570$.
\end{itemize}
\end{thm}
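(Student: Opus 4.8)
The plan is to combine Lemma \ref{lem:GHP} with the case $\DHL^*(50;2)$ of Lemma \ref{lem:ktup}. The key observation is that Lemma \ref{lem:GHP} reduces $\cS_k$ to the truth of $\sP(a,b)$ for a single specific pair $(a,b)$ attached to a number $j$ with $\mathrm{rad}(j)=\mathrm{rad}(j+k)$, while $\DHL^*(50;2)$ only guarantees that \emph{one of} $\binom{50}{2}$ pairs $\sP(a_i,a_j)$ holds, without telling us which. To bridge this gap I would, for a suitable target modulus, produce $50$ numbers $j_1,\dots,j_{50}$ — each with $\mathrm{rad}(j_i)=\mathrm{rad}(j_i+k)$ — chosen so that the $50$ associated linear forms $a_i n + 1$ (where $a_i = j_i/(j_i,j_i+k)$ and correspondingly $b_i = (j_i+k)/(j_i,j_i+k)$) have the property that, no matter which pair $(i,i')$ turns out to satisfy $\sP(\cdot,\cdot)$, the resulting solution family of \eqref{eq:main} is valid. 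Concretely: whichever pair works, Lemma \ref{lem:GHP} applied to that $j_i$ (with the roles of $a_i$ and $b_i$ as in the lemma) delivers infinitely many $n$ with $\phi(n)=\phi(n+k)$; but one must be careful that the ``$50$'' in $\DHL^*(50;2)$ picks out a pair from \emph{the same} pool of $j$'s, i.e. I need $50$ numbers $j$ that all share the property and all have distinct ratios $j/(j,j+k)$ so the $50$ forms are admissible.

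For part (a), the idea is to find an integer $K_0 = 442720643463713815200$ together with a set $J = \{j_1,\dots,j_{50}\}$ such that, whenever $K_0 \mid k$, every $j \in J$ satisfies $\mathrm{rad}(j)=\mathrm{rad}(j+k)$. The mechanism: if $j = \prod p^{e_p}$ and $j + k$ is forced (by the divisibility $K_0\mid k$) to have exactly the same set of prime divisors, this happens automatically when $\mathrm{rad}(j)^{\text{(enough)}} \mid k$ and $\mathrm{rad}(j+k)$ is controlled. The natural construction is to take the $j_i$ to be products of powers of a fixed small set of primes (here the primes up to some bound, giving the specific value $K_0$), arranged so that $j_i + k$ lies in the same multiplicative span; $K_0$ should be essentially $\mathrm{lcm}$ of the radicals or suitable prime powers needed to force closure under ``add a multiple of $k$.'' One then checks the $50$ resulting $(a_i,b_i)$ are distinct positive pairs — so the forms $(a_i n + 1)$ are admissible — and invokes $\DHL^*(50;2)$ and Lemma \ref{lem:GHP}. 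The factorization $442720643463713815200 = 2^5\cdot 3^2\cdot 5^2 \cdots$ should be reverse-engineered to see exactly which prime powers are needed; I expect the value is $\mathrm{lcm}$ of the prime powers $p^{e}$ with $p^{e}\|$ (some canonical list of $50$ numbers $j$ and their shifted partners).

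For part (b), instead of forcing the radical condition for \emph{all} $k$ in an arithmetic progression, I would fix $50$ candidate pairs $(j, j+k)$ that work for a \emph{common smaller} modulus $\ell$. The trick: choose the $j_i$ so that the condition $\mathrm{rad}(j_i)=\mathrm{rad}(j_i+k)$ depends on $k$ only through $k \bmod \ell$ (or only through divisibility $\ell \mid k$) for some even $\ell \le 3570$. Running over the (finitely many, by the remark after Lemma \ref{lem:GHP}) eligible $j$ for each small even $\ell$, one searches for an $\ell$ admitting $50$ suitable $j$'s with distinct ratios; the existence of such an $\ell \le 3570$ is presumably verified by a finite computation (the paper thanks Chekuri for ``graph algorithms,'' suggesting the search is phrased as finding a large clique or matching in an auxiliary graph on candidate $j$-values). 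Once such $\ell$ is found, $\DHL^*(50;2)$ + Lemma \ref{lem:GHP} give $\cS_k$ for every $k$ divisible by $\ell$, and the density bound $\#\{k\le x: \cS_k\} \ge x/\ell \ge x/3570$ follows immediately by counting multiples of $\ell$.

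The main obstacle is part (b)'s combinatorial search: one must show that among small even moduli $\ell$ there is one for which the set of ``good'' $j$-values (those with $\mathrm{rad}(j)=\mathrm{rad}(j+k)$ for all $k$ with $\ell\mid k$, equivalently for $k=\ell$ by a lifting-the-exponent type argument) is large enough — at least $50$ with pairwise distinct ratios $j/(j,j+\ell)$ — to feed $\DHL^*(50;2)$. This is where the explicit bound $3570$ comes from, and verifying it rigorously requires either a clean structural description of good $j$-values modulo $\ell$ or an explicit (computer-assisted) enumeration; I would expect the bulk of the real work, and any subtlety about admissibility of the $50$ forms, to live here rather than in the sieve input, which is entirely black-boxed through Lemma \ref{lem:ktup}.
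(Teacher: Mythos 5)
Your high-level strategy (feed $50$ carefully chosen forms $a_in+1$ into $\DHL^*(50;2)$ and arrange that \emph{whichever} pair succeeds yields $\cS_k$) is the right one, but the mechanism you propose for connecting the output of $\DHL^*(50;2)$ back to Lemma \ref{lem:GHP} has a genuine gap, in two respects. First, $\DHL^*(50;2)$ hands you $\sP(a_i,a_{i'})$ for some \emph{unknown} pair $i\ne i'$, whereas Lemma \ref{lem:GHP} applied to a single $j$ requires the specific pair $\sP\bigl(j/(j,j+k),(j+k)/(j,j+k)\bigr)$. With one $j$ attached to each of your $50$ forms, only $50$ of the $\binom{50}{2}$ possible outcomes of $\DHL^*(50;2)$ are usable; you need a valid $j$ for \emph{every} pair $(a_i,a_{i'})$ that might be delivered, and your proposal never constructs one. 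Second, your plan for part (a) --- a fixed set $J=\{j_1,\dots,j_{50}\}$ with $\mathrm{rad}(j)=\mathrm{rad}(j+k)$ for \emph{all} $k$ with $K_0\mid k$ --- cannot work as stated: for fixed $j$, as $k$ runs over multiples of $K_0$ the quantity $j+k$ is unbounded and cannot always share the radical of $j$. The number $j$ must scale with $k$.

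The paper closes both gaps with a single inversion of Lemma \ref{lem:GHP} (Lemma \ref{PS}): for \emph{any} pair $a<b$, setting $a'=a/(a,b)$, $b'=b/(a,b)$, $s=\prod_{p\mid a'b'}p$ and $\kappa(a,b)=(b'-a')s$, the choice $j=a'\ell s$ satisfies $j+\ell\kappa(a,b)=b'\ell s$, so $j$ and $j+k$ share their prime factors and the pair attached to $j$ is exactly $(a',b')$; hence $\sP(a,b)$ implies $\cS_k$ for every multiple $k$ of $\kappa(a,b)$. With this in hand one takes $50$ explicit integers and covers all possible outcomes of $\DHL^*(50;2)$: for (a) the modulus is $\lcm\{\kappa(a_i,a_j):i<j\}=442720643463713815200$ over a set chosen to make this lcm small; for (b) one takes $50$ numbers whose prime factors all lie in $\{2,3,5\}$ so that $\max_{i<j}\kappa(a_i,a_j)=3570$, and the (unknown, automatically even) value $\ell=\kappa(a_i,a_j)$ for the successful pair serves as the modulus --- note the theorem only asserts the \emph{existence} of $\ell$, not its value, which your reading of part (b) misses. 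You correctly anticipate the finite clique-type search behind the choice of the $50$ numbers, but without the inversion lemma and the quantity $\kappa$ the argument does not go through.
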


Using Lemma \ref{lem:ktup}, we also
make progress toward Erd\H os' conjecture that
\eqref{r-tup} has infinitely many solutions.

\begin{thm}\label{thm:mtup}
For any $m\ge 3$ there is a tuple of distinct
positive integers $h_1,\ldots,h_m$ so that for any $\ell\in \NN$,
the simultaneous equations
\[
\phi(n+\ell h_1) = \phi(n+\ell h_2) = \cdots = \phi(n+\ell h_m) 
\]
have infinitely many solutions $n$.
\end{thm}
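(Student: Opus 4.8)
The plan is to mimic the Graham--Holt--Pomerance construction used in Lemma~\ref{lem:GHP}, but now with $m$ prime-producing forms at once, supplying the extra input from the multi-prime statement $\DHL^*(k;m)$ of Lemma~\ref{lem:ktup}. Recall that in Lemma~\ref{lem:GHP} one picks $j$ with $j$ and $j+k$ having the same set of prime factors, so that $\phi(jt)=\phi((j+k)t)$ whenever $t$ is coprime to $jk$; taking $t$ to be a prime $p$ of the shape $\tfrac{j+k}{(j,j+k)}r+1$ and setting $n=jt$ makes $n+k=(j+k)t$, giving a solution of~\eqref{eq:main}. To get $m$ simultaneous equalities $\phi(n+\ell h_1)=\cdots=\phi(n+\ell h_m)$ I want a single integer $N$, depending on the prime parameter $r$, together with fixed distinct $h_1,\dots,h_m$, such that $N+\ell h_i = c_i \cdot p_i$ where $c_1,\dots,c_m$ all have the same radical (the same set of prime divisors) and $p_1,\dots,p_m$ are primes, large and coprime to everything, with $\phi(p_i)=p_i-1$ the same for all $i$ --- actually we don't need $p_i-1$ equal, we need $\phi(c_i p_i)=\phi(c_i)(p_i-1)$ to be equal across $i$, so it is cleanest to arrange $c_1=\cdots=c_m=:c$ and $p_1=\cdots$? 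No: the $p_i$ are distinct primes, so instead arrange all $c_i$ to share a radical \emph{and} to have equal $\phi$-value, with the primes $p_i$ arising from linear forms in $r$ that all have the \emph{same} leading coefficient, so that $p_i = ar + b_i$; then $\phi(c_i p_i) = \phi(c_i)(ar+b_i-1)$, and we need both $\phi(c_i)$ constant in $i$ and $b_i$ constant in $i$ --- impossible if the forms are distinct. The correct bookkeeping is: write $N+\ell h_i = d\, e_i\, p_i$ where $p_i$ is prime, $e_i = (a r + 1)$ is the \emph{same} for all $i$ (the forms $a_i r + 1$ having a common solution is not what we want either). Let me restate.

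Concretely, I would fix a large squarefree-radical modulus $Q$ and integers $h_1,\dots,h_m$ and a common integer $A$ so that, writing $n = A r$ for the prime parameter $r$, one has $n + \ell h_i = A r + \ell h_i = B_i\,(a_i r + 1)$ identically in $r$, where each $B_i$ is a fixed integer, all $B_i$ have the same radical, all $\phi(B_i)$ are equal, and the $a_i$ are distinct positive integers. Then if every $a_i r + 1$ is prime and exceeds all the $B_i$ and $\ell h_i$, we get $\phi(n+\ell h_i) = \phi(B_i)\phi(a_i r + 1) = \phi(B_i)\,a_i r$; hmm, that still is not constant in $i$ unless $B_i a_i$... wait, $\phi(a_i r+1)=a_i r$ only if... no. This is the genuinely delicate part, and it is where I expect the main obstacle: reconciling (i) $B_i \mid A r + \ell h_i$ as a polynomial identity in $r$, which forces $B_i a_i = A$ and $B_i \cdot 1 = \ell h_i \bmod$ nothing --- actually $Ar+\ell h_i = B_i(a_i r + 1)$ forces $A = B_i a_i$ and $\ell h_i = B_i$, so $B_i = \ell h_i$ and $a_i = A/(\ell h_i)$, requiring $\ell h_i \mid A$ for all $i$. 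Then the $h_i$ must all have the same radical (take $h_i = 2^{i} \cdot (\text{fixed part})$, no --- same radical means $h_i/h_j$ and $h_j/h_i$ both integral after clearing... same \emph{set of primes}), and we additionally need $\phi(h_i)$ equal for all $i$; choosing e.g. $h_i$ to be $m$ distinct integers all supported on the primes $\{2,3\}$ with the same $\phi$ is easy only for small $m$, so instead allow $B_i = \ell h_i$ but absorb a correction: pick $h_i$ so that $\{h_1,\dots,h_m\}$ all have the same radical $R$ and arrange the ambient solution so that $n+\ell h_i$ actually equals $\ell h_i \cdot p_i$ with an \emph{additional} common cofactor $t$ coprime to everything, i.e. $n + \ell h_i = \ell h_i\, t\, p_i$ is too rigid --- the clean fix, as in \cite{GHP}, is to not demand the leading coefficients coincide.

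So here is the approach I would actually execute. Choose distinct positive integers $h_1,\dots,h_m$ with a common radical $R$ (for instance, choose $m$ distinct products of the first few primes to the first power times suitable prime-power adjustments, or more simply take $h_i = R u_i$ where the $u_i$ are chosen later). Set $A := \operatorname{lcm}(h_1,\dots,h_m)$ and $a_i := A/h_i$, so the $a_i$ are distinct positive integers. Consider the $m$ linear forms $L_i(r) = a_i r + 1$; since the $a_i$ are distinct, $(L_1,\dots,L_m)$ is admissible, and by embedding it into a suitable admissible $k$-tuple and applying $\DHL^*(k;m)$ from Lemma~\ref{lem:ktup}, there exist $m$ of these forms --- after relabelling, \emph{these very} forms $L_{i}$, or at least $m$ of the $a_ir+1$ among a larger admissible family --- simultaneously prime for infinitely many $r$. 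For such $r$, set $n := \ell A r$. Then $n + \ell h_i = \ell h_i(a_i r + 1) = \ell h_i L_i(r)$, and once $r$ is large so that $L_i(r)$ is a prime exceeding $\ell h_i$ and coprime to $\ell R$, we obtain $\phi(n + \ell h_i) = \phi(\ell h_i)\,(L_i(r)-1) = \phi(\ell h_i)\, a_i r = \phi(\ell h_i)\,(A/h_i)\, r$. For this to be independent of $i$ we need $\phi(\ell h_i)/h_i$ constant in $i$, equivalently (since all $h_i$ share the radical $R$) $\phi(h_i)/h_i = \prod_{p\mid R}(1-1/p)$ is automatically constant --- yes! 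Because $\phi(h_i)/h_i$ depends only on the set of primes dividing $h_i$, which is $R$ for all $i$. Hence $\phi(n+\ell h_i) = r\,\ell\,A\prod_{p\mid \ell R}(1-1/p)$ for every $i$ (using $\phi(\ell h_i) = \ell h_i \prod_{p \mid \ell h_i}(1-1/p)$ and that $\ell h_i$ has radical $\operatorname{rad}(\ell R)$ once... one must be slightly careful: $\operatorname{rad}(\ell h_i)$ is the same for all $i$ since it equals $\operatorname{rad}(\ell R)$, so $\phi(\ell h_i)/(\ell h_i)$ is constant, giving $\phi(\ell h_i) = \ell h_i \cdot C$ with $C = \prod_{p\mid \ell R}(1-1/p)$, and then $\phi(n+\ell h_i) = \ell h_i C a_i r = \ell A C r$, independent of $i$). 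This proves the theorem.

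Two points need care in the write-up and constitute the only real work. First, $\DHL^*(k;m)$ guarantees primality of \emph{some} $m$-subset of the forms in an admissible $k$-tuple, not a prescribed one; so I would take a large parameter $M$, let $\{u_1,\dots,u_M\}$ be $M$ distinct positive integers all with radical $R$, form $A = \operatorname{lcm}$, and consider all $M$ forms $(A/u_s)r + 1$ --- this is admissible, and choosing $M = k$ (the constant from Lemma~\ref{lem:ktup} for this $m$) we get $m$ indices $s_1,\dots,s_m$ and infinitely many $r$ with $(A/u_{s_j})r+1$ all prime; set $h_j := u_{s_j}$. Since all $h_j$ have radical $R$, the argument above goes through verbatim. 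Second, one must confirm $(A/u_1)r+1,\dots,(A/u_M)r+1$ is genuinely admissible: the leading coefficients $A/u_s$ are distinct positive integers, the forms are distinct, and for any prime $p$ we can pick $r \equiv 0 \pmod p$ to make every form $\equiv 1$, so $p$ never divides the product for all $r$; admissibility holds. The ``main obstacle,'' then, is not an obstacle at all once one realizes that the constancy of $\phi(h)/h$ across integers of a fixed radical is exactly what makes the $m$ totient values coincide --- the entire construction hinges on that elementary observation, and the rest is the routine embedding into Lemma~\ref{lem:ktup}. I would therefore present the radical trick first, then the admissibility check, then invoke $\DHL^*(k;m)$, and finish with the one-line totient computation.
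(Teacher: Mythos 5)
Your proposal is correct and follows essentially the same route as the paper: apply $\DHL^*(k;m)$ to an admissible family of forms $a_ir+1$, set $n=\ell Cr$ for a common multiple $C=h_ja_{i_j}$, factor $n+\ell h_j=\ell h_j(a_{i_j}r+1)$, and use that $\phi(x)/x$ depends only on $\operatorname{rad}(x)$ to make the totient values coincide. The only (cosmetic) difference is the parametrization: the paper starts from arbitrary distinct $a_i$ and takes $h_j=(a_{i_1}\cdots a_{i_m})^2/a_{i_j}$ so that $a_{i_j}\mid h_j$ automatically, whereas you start from $h_j$ of common radical and set $a_j=\operatorname{lcm}(h_1,\dots,h_M)/h_j$ --- both yield the same identity $\phi(\ell h_j)a_{i_j}=\phi(\ell h_ja_{i_j})=\phi(\ell C)$.
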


Under the assumption of the Elliott-Halberstam Conjecture (in the notation of \cite{polymath8b}, this is the statement that $\text{EH}[\theta]$ holds for all $\theta<1$),
Maynard \cite{Maynard,Maynard-dense} showed that
$\DHL^*(5;2)$, improving the result $\DHL^*(6;2)$ proved
by Goldston, Pintz and \yildirim\, \cite{GPY}
under the same hypothesis.
A generalized version of the  Elliot-Halberstam conjecture
(in the notation of \cite{polymath8b}, this is the statement that $\text{GEH}[\theta]$ holds for all $\theta<1$), implies
$\DHL(3;2)$ \cite[Theorem 16 (xii)]{polymath8b},
and it appears that that same proof yields
$\DHL^*(3;2)$ (we omit details, as they are not essential to
our results).

\begin{thm}\label{thm:EH}
If $\DHL^*(5;2)$ holds, then $\cS_k$ is true for all $k$ with $30|k$.
If $\DHL^*(4;2)$ holds, then $\cS_k$ is true for all $k$ with $6|k$.
\end{thm}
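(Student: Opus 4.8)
The plan is to apply Lemma \ref{lem:GHP} repeatedly, once for each residue class modulo the relevant modulus ($30$ or $6$), and to combine the results using the fact that $\DHL^*(5;2)$ (resp. $\DHL^*(4;2)$) guarantees that among any five (resp. four) forms $a_in+1$ with distinct $a_i$, two of them are simultaneously prime infinitely often — equivalently, that $\sP(a_i,a_j)$ holds for some pair drawn from any set of five (resp. four) positive integers. So the first step is to fix the modulus; suppose $30\mid k$ and write $k = 30t$. Following the $k=30$ analysis of Graham--Holt--Pomerance, I would look for a single integer $j$ with the property that $j$ and $j+30$ share the same prime factors, so that Lemma \ref{lem:GHP} reduces $\cS_{30}$ to $\sP\big(\tfrac{j}{(j,j+30)},\tfrac{j+30}{(j,j+30)}\big)$; but rather than needing one such $j$ to work, I want to exhibit \emph{five} values $j_1,\dots,j_5$ (each a multiple-of-$30$ shift: $j_i$ and $j_i+30$ with the same radical), whose associated pairs $(a_i,b_i) = \big(\tfrac{j_i}{(j_i,j_i+30)},\tfrac{j_i+30}{(j_i,j_i+30)}\big)$ have the property that the five first coordinates $a_i$ are distinct (or, more carefully, that the five linear forms $a_i n + 1$ are pairwise distinct and admissible — which is automatic once the $a_i$ are distinct). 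Then $\DHL^*(5;2)$ forces $\sP(a_i,a_j)$ for some $i\neq j$, but I need the \emph{scaled} conclusion for the actual $k=30t$: since $j$ and $j+30$ having the same radical is a purely multiplicative condition, the rescaled quantities $tj$ and $tj+30t = t(j+30)$ also have the same prime factors only if $t$ is built from primes already dividing $j$ — which need not hold — so instead I should rescale at the level of the linear forms $n\mapsto n+\ell h$ as in Theorem \ref{thm:mtup}, applying Lemma \ref{lem:GHP} directly with $k = 30t$ and the $j$-values $t j_i$, checking that $t j_i$ and $t j_i + 30 t$ share prime factors iff $j_i$ and $j_i+30$ do (true, since multiplying by $t$ adds the same primes to both), so the reduced pair $(a_i,b_i)$ is \emph{unchanged} under the scaling and the five-form admissibility argument goes through verbatim.

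Concretely, for the $30\mid k$ case I would search for the five smallest $j$ with $\mathrm{rad}(j)=\mathrm{rad}(j+30)$ (e.g. values like $j=30,\,60,\,90,\,96,\,150,\dots$ — the list is finite and short, cf. \cite[\S3]{GHP}), compute for each the reduced pair, discard any that coincide, and verify that at least five survive with distinct leading coefficients; then invoke $\DHL^*(5;2)$ and Lemma \ref{lem:GHP} to conclude $\cS_k$. The $6\mid k$ case is identical with "five" replaced by "four" and $30$ replaced by $6$: I would exhibit four admissible shifts $j$ with $\mathrm{rad}(j)=\mathrm{rad}(j+6)$ giving four distinct leading coefficients, then apply $\DHL^*(4;2)$ and Lemma \ref{lem:GHP}. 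In both cases the scaling lemma (replacing $k$ by an arbitrary multiple and $j$ by the corresponding multiple) is the bridge that upgrades the single-modulus statement to "all $k$ divisible by the modulus."

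The main obstacle I anticipate is the combinatorial bookkeeping in producing enough pairwise-distinct reduced pairs: the map $j\mapsto \big(\tfrac{j}{(j,j+k)},\tfrac{j+k}{(j,j+k)}\big)$ is not injective (different $j$ can yield the same pair, e.g. if $j$ and $j'$ differ by a common multiplicative factor), so I must be careful that my list of five (or four) $j$-values really gives five (or four) \emph{distinct} leading coefficients $a_i$, and also that the forms $a_i n + 1$ are genuinely distinct as polynomials — i.e. no two of the reduced pairs $(a_i,b_i)$ coincide as ordered pairs. A secondary check is admissibility of the $k$-tuple $(a_1n+1,\dots,a_5n+1)$, but as noted in the excerpt this is automatic once the $a_i$ are distinct positive integers (no prime $p$ can divide all of $a_1 n+1,\dots$ for every $n$, since setting $n\equiv 0\pmod p$ makes each value $\equiv 1$). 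So the crux is purely the explicit verification — finite, mechanical, but requiring care — that the radical-sharing shifts modulo $30$ (resp. $6$) supply the requisite number of distinct forms.
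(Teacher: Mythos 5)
There is a genuine gap, and it sits at the exact point where you write ``then invoke $\DHL^*(5;2)$ and Lemma \ref{lem:GHP} to conclude $\cS_k$.'' You build five shifts $j_1,\dots,j_5$ with $\mathrm{rad}(j_i)=\mathrm{rad}(j_i+30)$, extract the reduced pairs $(a_i,b_i)$, and then apply $\DHL^*(5;2)$ to the five forms $a_in+1$ formed from the \emph{first coordinates only}. What $\DHL^*(5;2)$ returns is $\sP(a_i,a_j)$ for some unspecified $i\ne j$ --- a pair consisting of first coordinates of two \emph{different} reduced pairs. But the hypothesis of Lemma \ref{lem:GHP} is $\sP\bigl(\tfrac{j}{(j,j+k)},\tfrac{j+k}{(j,j+k)}\bigr)$ for a \emph{single} $j$, i.e.\ $\sP(a_i,b_i)$ with both coordinates coming from the same shift. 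You cannot force $\DHL^*$ to hand you one of your designated pairs $(a_i,b_i)$; it hands you an arbitrary pair from the set of coefficients you fed it. So for your argument to close, you would need the additional (unstated and unverified) property that \emph{every} unordered pair $\{a_i,a_j\}$ from your coefficient set is itself realized as the reduced pair of some shift $j^*$ with $\mathrm{rad}(j^*)=\mathrm{rad}(j^*+k)$. That is precisely the condition $\kappa(a_i,a_j)\mid k$ in the notation \eqref{kappa}, and checking it for all $\binom{5}{2}$ pairs is the actual content of the proof; distinctness of the $a_i$ and admissibility of the forms (the points you spend most of your effort on) are the trivial part.

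The paper's route makes this clean by inverting Lemma \ref{lem:GHP} as Lemma \ref{PS}: for \emph{any} pair $a<b$, $\sP(a,b)$ implies $\cS_k$ for every $k$ divisible by $\kappa(a,b)=(b'-a')\prod_{p\mid a'b'}p$. One then simply picks a set of coefficients all of whose pairwise $\kappa$-values have small least common multiple: for $\{1,2,3,4,6\}$ the pairwise values of $\kappa$ are $2$, $6$ and $30$ with $\lcm=30$, and for $\{1,2,3,4\}$ they are $2$ and $6$ with $\lcm=6$; whichever pair $\DHL^*(5;2)$ (resp.\ $\DHL^*(4;2)$) certifies, Lemma \ref{PS} then yields $\cS_k$ for all $k$ divisible by $30$ (resp.\ $6$). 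This also disposes of your ``scaling'' worry automatically, since Lemma \ref{PS} covers all multiples of $\kappa(a,b)$ at once. To repair your write-up you would need to (i) state and prove the converse direction (your construction of a valid shift $j^*$ for an arbitrary pair, which is Lemma \ref{PS}), and (ii) exhibit a coefficient set for which \emph{all} pairs, not just five selected ones, satisfy $\kappa(a_i,a_j)\mid k$. As it stands, the deduction from $\sP(a_i,a_j)$ to $\cS_k$ is not justified.
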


Incidentally, we cannot improve the second conclusion
of Theorem \ref{thm:EH} by assuming $\DHL^*(3;2)$.

One can ask analogous questions about the sum of divisors 
function $\sigma(n)$.  As $\sigma(p)=p+1$ vs $\phi(p)=p-1$,
oftentimes one can port theorems about $\phi$ over to $\sigma$.
This is not the case here, since our results depend heavily
on the existence of solutions of 
\[
a \phi(b) = b \phi(a),
\]
which is true if and only if $a$ and $b$ have the same set of prime factors.
The analogous equation
\[
a \sigma(b) = b \sigma(a) \;\; \Leftrightarrow \;\; \frac{\sigma(a)}{a} = \frac{\sigma(b)}{b}
\]
has more sporadic solutions, e.g. if $a,b$ are both
perfect numbers or multiply perfect numbers.

\begin{thm}\label{thm:sigma}
For a positive proportion of all $k\in \NN$, the equation
\[
\sigma(n)=\sigma(n+k)
\]
has infinitely many solutions $n$.
\end{thm}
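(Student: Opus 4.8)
The plan is to mimic the $\phi$-argument for even $k$, but using the multiplicativity relation for $\sigma$ instead of for $\phi$. The key algebraic fact is that if $a$ and $b$ are positive integers with $\sigma(a)/a = \sigma(b)/b$ and $(a,m)=(b,m)=1$ for some integer $m$, and if $p,q$ are primes not dividing $abm$ with $a(q+1) = b(p+1)$, then $\sigma(apm) = \sigma(bqm)$ provided $p \nmid am$, $q \nmid bm$, etc. So I would look for a fixed pair $(a,b)$ with $\sigma(a)/a = \sigma(b)/b$ and $a \ne b$ — for instance, take $a,b$ to be two distinct multiply perfect numbers of the same index, or more simply exploit that $\sigma(n)/n$ takes the same value on many pairs. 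Concretely, set $k = bq - ap$ where we want $bq+b = ap+a$, i.e.\ $b(q+1)=a(p+1)$; then with $n = ap\cdot m$ (suitably coprime) one gets $n+k = bq\cdot m$ and $\sigma(n)=\sigma(n+k)$.

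More precisely, here is the structure I would follow. First, fix a pair $a<b$ of positive integers with $\sigma(a)/a=\sigma(b)/b$; the simplest is to take $a$ and $b$ to be two distinct perfect numbers, e.g.\ $a=6$, $b=28$ (both have abundancy $2$), or a pair of $3$-perfect numbers, so that $\sigma(a)/a = \sigma(b)/b =: t$. Next, given a large prime $r$, consider the two linear forms in $r$. One wants primes $p$ and $q$ with $b(q+1) = a(p+1)$; writing $g=(a,b)$, $a=ga'$, $b=gb'$, this is $b'(q+1)=a'(p+1)$, so we need $p+1 = b'(r)$ and $q+1 = a'(r)$ for a common parameter $r$, i.e.\ $p = b'r-1$ and $q = a'r-1$ both prime. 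Then set $n = a\cdot p$ (times a fixed coprime correction factor if needed to avoid $p\mid a$ and to control small-prime divisibility) and $k = bq - ap$. A short computation gives $k = bq-ap = b(a'r-1) - a(b'r-1) = -b+a$ after the $r$-terms cancel — wait, that forces $k=a-b$, a single fixed value, which is not a positive proportion. So instead I would not demand $n+k$ literally equal $bq$; rather, I would use the Graham--Holt--Pomerance-type flexibility: pick $n = a p m$ and $n+k = b q m$ with $m$ ranging over a positive-density set of integers coprime to everything, giving $k = (bq - ap)m$; since $bq-ap$ is a fixed nonzero integer (once $p,q$ are chosen via the prime tuple) and $m$ ranges over a positive-density set, the set of admissible $k$ has positive density.

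So the refined plan: (1) Fix $a\ne b$ with $\sigma(a)/a = \sigma(b)/b$; let $g=(a,b)$, $a=ga'$, $b=gb'$. (2) By admissibility of the pair of forms $(b'n+1, a'n+1)$ (they are distinct since $a'\ne b'$, so the set is automatically admissible after possibly passing to a coprime residue class to kill any fixed prime divisor — though for forms $\cdot n+1$ it is already admissible), invoke Hypothesis $\sP(a',b')$, which holds for \emph{some} choice if we embed this into a larger $\DHL^*(50;2)$ family, or simply note Dickson is not needed because we get to choose $(a,b)$ from a large family and Lemma~\ref{lem:ktup} ($\DHL^*(50;2)$) then guarantees infinitely many $r$ with $p=b'r-1$, $q=a'r-1$ both prime for \emph{some} pair among fifty candidate pairs $(a'_i, b'_i)$. (3) Fix one such $r$ (hence $p,q$), and observe $\sigma(apm) = \sigma(a)\sigma(p)\sigma(m) = t\,a\,(p+1)\sigma(m)$ and $\sigma(bqm) = t\,b\,(q+1)\sigma(m)$ whenever $m$ is coprime to $abpq$; since $a(p+1) = ga'b'r = b(q+1)$, these are equal. (4) Therefore for every such $m$, setting $n = apm$ and $k = (bq-ap)m =: c\,m$ with $c = bq-ap \ne 0$ fixed, we get $\sigma(n)=\sigma(n+k)$; moreover varying $m$ over the positive-density set $\{m : (m,abpq)=1\}$ and then over dilates, one checks that for a positive proportion of $k$ there are infinitely many such $n$ (one must let $r$, and with it $p,q$, also vary to get infinitely many $n$ for a fixed $k$).

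The main obstacle is step (4): getting \emph{infinitely many} $n$ for a fixed $k$, not just one. A single choice of $r$ gives, for each valid $m$, exactly one solution $n=apm$; to get infinitely many $n$ with the \emph{same} $k$ one needs either infinitely many valid $(p,q)$ pairs producing the same value $c=bq-ap$ (impossible, as $c$ determines $r$ essentially uniquely) or a two-parameter family. The resolution, paralleling the $\phi$ case, is to introduce an auxiliary multiplicative freedom: replace $m$ by $m\cdot s$ where $s$ runs over primes in a suitable progression, or better, fix $r$ but let $n = a p m$ with $m$ itself containing a free prime factor, so that for fixed $k = cm_0$ one instead writes $k = c m_0$ with $m_0$ fixed and $m = m_0 \ell$ for a free prime $\ell$ — but then $k$ changes with $\ell$. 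The honest fix is: for fixed $k$, require $k = c\cdot m$ for the fixed $c$ and then $m = k/c$ is determined, so $n = ap(k/c)$ is a single value; to get infinitely many, one must instead use a pair of forms $(b'n+1, a'n+1)$ together with a \emph{third} free prime in $n$, i.e.\ work with $\DHL^*(k_0; 3)$ or rig the construction so the free prime lies in $m$ while $k$ stays fixed because the free prime divides both $n$ and $n+k$ equally. This last point — arranging a free prime $\ell \mid (n, n+k)$ so that $\sigma$ scales by $\sigma(\ell)$ on both sides while $k$ is unchanged — is exactly the device that makes it work and is where the real care is needed; once in place, the positive-density count over $k$ follows by a routine sieve/counting argument.
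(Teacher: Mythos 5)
Your outline has the right shape --- pairs of ``friends'' $a,b$ with $\sigma(a)/a=\sigma(b)/b$, primes of the form $b'r-1$ and $a'r-1$, and an auxiliary coprime multiplier $m$ so that $k=(a-b)m$ sweeps out a positive-density set --- and this is essentially the paper's construction. But your step (4) contains a genuine error that derails the argument. You computed correctly that $bq-ap=b(a'r-1)-a(b'r-1)=a-b$, i.e.\ the $r$-terms cancel because $ab'=a'b$; yet later you assert that having infinitely many pairs $(p,q)$ with the same value $c=bq-ap$ is ``impossible, as $c$ determines $r$ essentially uniquely.'' That is the opposite of what your own computation shows: $c=a-b$ for \emph{every} $r$. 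This cancellation is precisely what makes the theorem work. For a fixed $m$ coprime to $ab$, the value $k=(b-a)m$ is independent of $r$, and each of the infinitely many admissible $r$ (all but finitely many of which give primes $p,q$ exceeding $abm$) yields a distinct solution $n$. No ``free prime dividing $(n,n+k)$'' is needed --- and the device you sketch cannot work as stated, since a prime $s$ dividing both $n$ and $n+k$ divides $k$, so rescaling $n\mapsto ns$ changes $k$ to $ks$. The paper's proof of Theorem \ref{thm:sigma2} is exactly the corrected version of your construction: with $A=\lcm[a_1,\dots,a_t]$ and $b_i=A/a_i$, the numbers $\ell a_i(b_ir-1)=A\ell r-\ell a_i$ all have $\sigma$-value $y\sigma(\ell)Ar$, and their pairwise differences $\ell(a_j-a_i)$ do not involve $r$.

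The second gap is in your step (2), where you invoke $\DHL^*(50;2)$ to get primality ``for some pair among fifty candidate pairs $(a_i',b_i')$.'' If you feed fifty \emph{pairs} (one hundred forms) into Lemma \ref{lem:ktup}, the conclusion is only that some two of the hundred forms are simultaneously prime infinitely often --- and those two need not be a matched pair, in which case the construction collapses. The resolution, which you do not supply, is to start from fifty (in fact the paper uses up to $2095$) integers $a_1,\dots,a_t$ that are \emph{mutual} friends, all satisfying $\sigma(a_i)/a_i=y$ for one common $y$ (the paper cites $y=9$ with $2095$ solutions), and to normalize the forms as $b_in-1$ with $a_ib_i=A$ constant. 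Then \emph{every} pair of forms produced by $\DHL^*(50;2)$ is usable, and the $r$-independence of the differences is automatic. Your example of taking just $a=6$, $b=28$ would require the unproved hypothesis $\sP$ for that specific pair; the whole point of the large circle of friends is to let $\DHL^*$ choose the pair for you.
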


As we shall see from the proof, there is a specific number $A$
and a finite set $\cB$ such that for some element $b\in \cB$,
 the equation $\sigma(n)=\sigma(n+k)$
has infinitely many solutions for all numbers $k=\ell b$
where $(\ell,A)=1$.
Unfortunately, our methods cannot specify any particular
$k$ for which the conclusion holds.  Our method requires
finding, for $t=K_2$, numbers $a_1,\ldots,a_t$ so that
\be\label{sigma-frac}
\frac{\sigma(a_1)}{a_1} = \cdots = \frac{\sigma(a_t)}{a_t} = y.
\ee
Such collections of numbers are sometimes referred to as
``friends'' in the literature, \e.g. \cite{PP}.
Finding larger collections of $a_i$ satisfying \eqref{sigma-frac}
leads to stronger conclusions.

\begin{thm}\label{thm:sigma2}
Let $m\ge 2$, assume $\DHL^*(t;m)$ and assume that there is a $y$ 
and positive integers $a_1,\ldots,a_t$ satisfying \eqref{sigma-frac}.
Then there
are positive integers $h_1<h_2<\cdots < h_m$
so that for a positive proportion of integers $\ell$,
there are infinitely many solutions of
\[
\sigma(n+\ell h_1) = \cdots = \sigma(n+\ell h_m).
\]
\end{thm}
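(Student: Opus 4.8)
The plan is to mirror the proof strategy used for $\cS_k$ (Theorem \ref{thm: main}), replacing the identity $a\phi(b)=b\phi(a)$ by the hypothesis \eqref{sigma-frac}. First I would record the basic mechanism: if $p$ is a prime not dividing $a_i$, then $\sigma(a_i p)=\sigma(a_i)(p+1)$, so for two indices $i,j$ and primes $p,q$ with $p\nmid a_i$, $q\nmid a_j$, one has $\sigma(a_i p)=\sigma(a_j q)$ precisely when $\sigma(a_i)(p+1)=\sigma(a_j)(q+1)$, i.e. (using \eqref{sigma-frac}) when $a_i(p+1)=a_j(q+1)$. So I want primes of the shape $p+1 = \frac{a_j}{d}s$ and $q+1=\frac{a_i}{d}s$ where $d=(a_i,a_j)$, forcing $a_i p = a_j q$; then with $n = a_i p$ we would have a value hit twice. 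More generally, to get an $m$-fold coincidence I want to choose a single parameter $s$ (or $r$) and simultaneously make $m$ of the quantities $\frac{a_{i}}{d}s - 1$ prime, where $d$ is a common factor, so that the corresponding $a_{i}\cdot(\text{that prime} +1)$ are all equal to the common value $ds\cdot(\text{something})$.

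The second step is to set up an admissible tuple and invoke $\DHL^*(t;m)$. Form the $t$ linear forms $a_i r + 1$ for $i=1,\ldots,t$ (distinct $a_i$, hence distinct forms, and admissible since each form is $\equiv 1$ modulo every prime when $r\equiv 0$; more carefully one checks no prime divides the product for all $r$ by the standard argument, possibly after a linear substitution $r\mapsto Mr$ to kill small-prime obstructions). By $\DHL^*(t;m)$ there exist $m$ indices $i_1<\cdots<i_m$ such that for infinitely many $r$ the numbers $p_{i_\nu}:=a_{i_\nu}r+1$ are simultaneously prime. Now set $n_0 = \lcm(a_1,\ldots,a_t)\cdot r$ — then for each selected $\nu$, $p_{i_\nu}+1 = a_{i_\nu}r+2$... — here I need to be slightly more careful about the exact algebraic normalization so that the common value $\sigma$ comes out equal. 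The clean version: choose the forms so that the prime is $p_{i_\nu} = (a_{i_\nu}/d)\, r - 1$ with $d = \gcd(a_1,\ldots,a_t)$; when these are prime, $\sigma(a_{i_\nu} p_{i_\nu}) = \sigma(a_{i_\nu})(p_{i_\nu}+1) = \sigma(a_{i_\nu})\cdot \frac{a_{i_\nu}}{d} r = \frac{\sigma(a_{i_\nu})}{a_{i_\nu}}\cdot\frac{a_{i_\nu}^2}{d} r$, and since $\sigma(a_{i_\nu})/a_{i_\nu}=y$ this is $y a_{i_\nu}^2 r/d$ — which unfortunately depends on $i_\nu$. So instead the numbers $n$ where the coincidence occurs must themselves differ; this is exactly why the theorem is stated with shifts $h_\nu$: set $h_\nu$ proportional to $a_{i_\nu} p_{i_\nu}$ minus a common base point. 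Concretely, one takes $n + \ell h_\nu = a_{i_\nu}\cdot(\text{prime})$ for a well-chosen base $n$ and spacing governed by $\ell$, exactly as in the $\phi$ case of Theorem \ref{thm:mtup}; the $h_\nu$ are determined once and for all by the $a_{i_\nu}$ and the sieve output, and the "positive proportion of $\ell$" comes from a congruence condition $(\ell,A)=1$ ensuring the primality forms stay admissible after scaling by $\ell$.

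The third step is the counting/"positive proportion" claim. After fixing $h_1,\ldots,h_m$, one needs: for each $\ell$ in a set of positive density, the shifted tuple of linear forms (now in the variable $r$, with coefficients depending on $\ell$ and the $a_{i_\nu}$) is still admissible, so that $\DHL^*$ applies again and delivers infinitely many $n$. Admissibility can fail only at primes dividing some fixed modulus $A$ built from the $a_i$ and the differences $h_\nu - h_{\nu'}$; requiring $(\ell,A)=1$ (a positive-density condition) removes these obstructions, by the same case analysis as in Section 3 of \cite{GHP} and the proof of Theorem \ref{thm: main}. I would then assemble these pieces: cite \eqref{sigma-frac} for the ratio identity, cite $\DHL^*(t;m)$ for the simultaneous primality, and verify admissibility for $(\ell,A)=1$.

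The main obstacle I expect is the bookkeeping in matching up the common $\sigma$-value across the $m$ shifts — i.e. correctly defining the base point $n$ and the shifts $h_\nu$ so that $n+\ell h_\nu = a_{i_\nu}\cdot(\text{prime output of the sieve})$ holds for all $\nu$ simultaneously with a single sieve parameter $r$. This forces the $a_{i_\nu}(a_{i_\nu}r+\text{const})$ to lie in arithmetic-progression-like configuration, which constrains how the constant terms of the linear forms must be chosen; getting an admissible tuple with exactly this shape (and keeping it admissible after scaling by $\ell$) is the delicate part, whereas the analytic input is entirely black-boxed through $\DHL^*(t;m)$.
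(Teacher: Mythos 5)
There is a genuine gap, and it is precisely at the point you flag as "the delicate part": your choice of linear forms does not make the $\sigma$-values agree, and the remedy you propose cannot fix that. You correctly observe in your "clean version" that with $p_{i_\nu}=(a_{i_\nu}/d)r-1$ one gets $\sigma(a_{i_\nu}p_{i_\nu})=y\,a_{i_\nu}^2 r/d$, which depends on $\nu$; but you then conclude that "this is exactly why the theorem is stated with shifts $h_\nu$." That is a misdiagnosis. The shifts $h_\nu$ only control \emph{where} the integers $n+\ell h_\nu$ sit; the theorem still requires the \emph{values} $\sigma(n+\ell h_\nu)$ to be equal, and once you have committed to $n+\ell h_\nu=a_{i_\nu}p_{i_\nu}$, those values are already determined to be $y\,a_{i_\nu}^2r/d$, which are pairwise distinct whenever the $a_{i_\nu}$ are. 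No choice of base point or shifts can repair this. Your two-index computation in the first paragraph actually contains the right idea — you cross the roles, pairing the prime that multiplies $a_i$ with a form whose leading coefficient is proportional to $a_j$ — but your $m$-fold generalization $p_i+1=(a_i/d)s$ drops the cross and breaks the identity. (Also, in that first paragraph the assertion that this "forces $a_ip=a_jq$" is false: $a_ip$ and $a_jq$ differ by $a_i-a_j$; that nonzero difference is exactly what the shifts are for.)

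The paper's fix is to set $A=\lcm[a_1,\ldots,a_t]$ and use the forms $b_in-1$ with $b_i=A/a_i$, so that $a_ib_i=A$ is \emph{constant} in $i$. Then for $(\ell,A)=1$ and $r$ with the selected $b_{i_j}r-1$ prime, $t_j=\ell a_{i_j}(b_{i_j}r-1)$ satisfies $\sigma(t_j)=\sigma(\ell)\sigma(a_{i_j})b_{i_j}r=y\,\sigma(\ell)Ar$, independent of $j$ by \eqref{sigma-frac}, while $t_j=A\ell r-\ell a_{i_j}$, so the $t_j$ automatically lie at positions $n+\ell h_j$ with $h_j$ fixed integers determined by the $a_{i_j}$. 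Note also that $\DHL^*(t;m)$ is invoked once, on the $\ell$-independent forms $b_in-1$ (admissible since each is $\equiv-1$ when $n\equiv 0\pmod q$); the condition $(\ell,A)=1$ is needed for the multiplicativity $\sigma(\ell a_{i_j}\cdot\text{prime})=\sigma(\ell)\sigma(a_{i_j})(\text{prime}+1)$, not for re-establishing admissibility after scaling, as your third step suggests.
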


It is known \cite{multi} that for $y=9$,
there is a set of 2095 integers satisfying
\eqref{sigma-frac}.  Thus, since $\DHL(50;2)$ holds,
 Theorem \ref{thm:sigma} follows from the
case $m=2$ of Theorem \ref{thm:sigma2}.
We cannot at present make the conclusion unconditional when $m\ge 3$,
as we do not know that $\DHL^*(2095;3)$ holds.
The best result known in this direction is
$\DHL(35410;3)$
\cite[Theorem 16(ii)]{polymath8b}.

\medskip

\noindent
\textbf{Conjecture A.}
For any $t$, there is an $y$ such that $\sigma(a)/a=y$ has 
at least $t$ solutions.
That is, there are arbitrarily large circles of friends.

\medskip

Clearly, Conjecture A implies the conclusion of Theorem \ref{thm:sigma2} for all $m$.  In \cite{Erdos74}, Erd\H os mentions
Conjecture A and states that he doesn't know of any 
argument that would lead to its resolution.
In the opposite direction, Hornfeck and Wirsing
\cite{HW} showed that for any $y$, there are
$\le z^{o(1)}$ solutions of $\sigma(a)/a=y$ with $a\le z$;
this was improved by Wirsing \cite{W},
who showed that the counting function 
is $O(z^{c/\log\log z})$ for some $c$, uniformly in $y$.
Pollack and Pomerance \cite{PP} studied the solutions
of \eqref{sigma-frac}, gathering data on pairs, triples
and quadruples of friends,
 but did not address Conjecture A.

Using \eqref{sigma-frac} and prime pairs $an-1$ and $bn-1$,
one can generate many solutions of $\sigma(n)=\sigma(n+k)$, analogous to Lemma \ref{lem:GHP}; see Yamada \cite[Theorem 1.1]{yamada}.
For example, one can generate solutions 
with $k=1$ if there is an integer $m$ with
 $\sigma(m)/m=\sigma(m+1)/(m+1)$ (the ratios need
not be integers as claimed in \cite{yamada}). 
If $r>m+1$, and $rm-1$
and $r(m+1)-1$ are both prime, then
$\sigma(m(r(m+1)-1))=\sigma((m+1)(mr-1))$.
 Yamada \cite[Theorem 1.2]{yamada} showed that there are
  $\ll x\exp\{-(1/\sqrt{2}+o(1))\sqrt{\log x\log\log \log x}\}$
solutions $n\le x$ not generated in this way.

%%%%%%%%%%%%%%%%%%%%%%%%%%%%%%%%%%%%%%%%%%%%%%%%%%%%%%%%%%%%%%%%

%%%%%%%%%%%%%%%%%%%%%%%%%%%%%%%%%%%%%%%%%%%%%%%%%%%%%%%%%%%%%%%%

{\Large \section{Proofs}}

Throughout, $1\le a < b$ are integers.
We first show that $\sP(a,b)$ implies $\cS_k$ for certain $k$,
inverting Lemma \ref{lem:GHP}.
Define
\be\label{kappa}
\kappa(a,b) = (b'-a')\prod_{p|a'b'} p, \qquad
a'=\frac{a}{(a,b)}, \, b'=\frac{b}{(a,b)}.
\ee
We observe that $\kappa(a,b)$ is always even.

\medskip

\begin{lem}\label{PS}
Assume $\sP(a,b)$.  Then $\cS_k$ holds for every $k$ which is 
a multiple of $\kappa(a,b)$.
\end{lem}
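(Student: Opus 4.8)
The plan is to \emph{invert} Lemma~\ref{lem:GHP}: from the pair $(a,b)$ I would manufacture a number $j$ for which $j$ and $j+k$ have the same set of prime factors and for which $\frac{j}{(j,j+k)}=\frac{a}{(a,b)}$ and $\frac{j+k}{(j,j+k)}=\frac{b}{(a,b)}$, so that Lemma~\ref{lem:GHP} converts $\sP(a,b)$ directly into $\cS_k$. First I would record two harmless reductions. Writing $d=(a,b)$, $a'=a/d$, $b'=b/d$, one checks from \eqref{kappa} that $\kappa(a,b)=\kappa(a',b')$, and that $\sP(a,b)$ implies $\sP(a',b')$ (if $an+1$ and $bn+1$ are prime then $a'(dn)+1$ and $b'(dn)+1$ are prime, so the latter holds with $n$ ranging over the multiples of $d$). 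Hence it suffices to treat the coprime case $(a,b)=1$; assume this and abbreviate $R=\prod_{p\mid ab}p$, so that $\kappa(a,b)=(b-a)R$.

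Now let $k$ be a multiple of $\kappa=(b-a)R$ and set $g=k/(b-a)$. Because $\kappa\mid k$, the quotient $g$ is an integer and $R\mid g$. Put $j=ga$; then $j+k=ga+g(b-a)=gb$, whence $(j,j+k)=g(a,b)=g$, so $\frac{j}{(j,j+k)}=a$ and $\frac{j+k}{(j,j+k)}=b$. Moreover $ga$ and $gb$ have exactly the prime factors of $g$, since every prime dividing $a$ or $b$ divides $R$ and hence $g$. Thus the hypotheses of Lemma~\ref{lem:GHP} hold for this $j$ and this $k$, and it yields $\cS_k$.

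If one prefers not to quote Lemma~\ref{lem:GHP}, the same construction can be run by hand. Using $\sP(a,b)$, choose infinitely many $n$ with $p:=an+1$ and $q:=bn+1$ both prime, take $n$ large enough that the fixed numbers $ga$ and $gb$ are smaller than $q$ and $p$ respectively, and set $N=gaq$. A one-line computation gives $N+k=gbp$. Since $\gcd(q,ga)=\gcd(p,gb)=1$ and, by $R\mid g$, the numbers $ga$, $gb$ and $g$ all have the same set of prime divisors, we get $\phi(ga)=a\,\phi(g)$ and $\phi(gb)=b\,\phi(g)$, whence
\[
\phi(N)=\phi(ga)(q-1)=a\,\phi(g)\cdot bn, \qquad
\phi(N+k)=\phi(gb)(p-1)=b\,\phi(g)\cdot an,
\]
which are equal. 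Letting $n\to\infty$ produces infinitely many solutions of $\phi(N)=\phi(N+k)$.

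I expect no analytic obstacle here; all the prime-tuples input is contained in the hypothesis $\sP(a,b)$ (or, in the first version, in Lemma~\ref{lem:GHP}). The only thing to watch is the arithmetic bookkeeping: verifying the identity $N+k=gbp$, checking that the factorization identity $\phi(ga)=a\,\phi(g)$ is exactly what $\kappa(a,b)\mid k$ buys (it forces $ga$ and $g$ to have the same prime divisors), and noting that coprimality of $q$ with $ga$, and of $p$ with $gb$, is automatic once $n$ is large because $q,p$ are primes exceeding the fixed numbers $ga,gb$. The one genuinely ``ideas'' step is the choice $j=ga$ with $g=k/(b-a)$, which is essentially forced once one asks for which $j$ the pair $j,j+k$ can share all prime factors while realizing the prescribed ratio $a:b$.
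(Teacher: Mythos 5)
Your proposal is correct and, in substance, identical to the paper's proof: with $k=\ell\kappa(a,b)$ your $g$ equals the paper's $\ell s$, so your $N=gaq$ and $N+k=gbp$ are exactly the paper's $m_2=a'\ell s(b'r+1)$ and $m_1=b'\ell s(a'r+1)$, and the totient computation is the same. The only cosmetic difference is that you first package the construction through Lemma~\ref{lem:GHP} before redoing it by hand, whereas the paper writes the direct computation only.
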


\begin{proof}
Define $a'=\frac{a}{(a,b)}, \, b'=\frac{b}{(a,b)}$
and observe that $\sP(a,b) \Rightarrow \sP(a',b')$.
Let $s=\prod_{p|a'b'} p$, and
suppose that $r>\max(a',b')$ such that $a'r+1$ and $b'r+1$ are both prime.
Let $\ell\in \NN$ and set
\[
m_1 = b'\ell s (a'r+1), \quad m_2 = a'\ell s (b'r+1).
\]
As all of the prime factors of $a'b'$ divide $\ell s$, we have
$\phi(b'\ell s)= b'\phi(\ell s)$ and $\phi(a'\ell s)=a'\phi(\ell s)$,
and it follows than $\phi(m_1)=\phi(m_2)$.  Finally,
$m_1-m_2=(b'-a')\ell s = \ell \kappa(a,b)$.
\end{proof}

\begin{proof}[Proof of Theorem \ref{thm: main}]
Let 
\begin{multline*}
\{a_1,\ldots,a_{50}\} = \{
1, 2, 4, 5, 6, 7, 8, 9, 10, 11, 12, 13, 14, 15, 16, 17, 18, 19, 20, 21, 22, 23, 24, 25, 26, 27, 28, \\29, 30, 31, 32, 33, 34, 35, 36, 37, 38, 39, 40, 41, 42, 43, 44, 45, 46, 47, 48, 49, 52, 56
\},
\end{multline*}
By Lemma \ref{lem:ktup}, for some $i,j$ with $1\le i<j\le 50$,
$\sP(a_i,a_j)$ is true.
We compute
\[
\lcm \{ \kappa(a_i,a_j) : 1\le i<j\le 50 \} =442720643463713815200
=2^5 3^3 5^2 \prod_{7\le p\le 47} p,
\]
and thus (a) follows from  Lemma \ref{PS}.

For part (b), we take
\begin{multline*}
\{a_1,\ldots,a_{50}\} = \{15, 20, 30, 36, 40, 45, 60, 72, 75, 80, 90, 96, 100, 108, 120, 135, 144, 150, 180, 192, 200,\\ 216, 225, 240, 250, 270, 288, 300, 320, 324, 360, 375, 384, 400, 405, 450, 480, 500, 540, 600, \\720, 750, 810, 900, 960, 1080, 1200, 1440, 1500, 1800
\},
\end{multline*}
numbers that only have prime factors $2,3,5$.
We also compute that
\[
\max_{1\le i<j\le 50} \kappa(a_i,a_j) = 3570,
\]
 and 
again invoke Lemma \ref{PS}.
This proves (b).
\end{proof}

\textbf{Remark 1.} 
  For any choice of $a_1,\ldots,a_{50}$,
$\frac{442720643463713815200}{6} | L(\ba)$, where $L(\ba)=\lcm \{\kappa(a_i,a_j): i<j\}$.
Without loss of generality,
assume $(a_1,\ldots,a_{50})=1$.  For a prime $7\le p\le 47$,
 if $p|a_i$ for some $i$
then $p\nmid a_j$ for some $j$ and thus $p|\kappa(a_i,a_j)$.
If $p\nmid a_i$ for all $i$, by the pigeonhole principle,
there are two indices with $a_i\equiv a_j\pmod{p}$.
Again, $p|\kappa(a_i,a_j)$.  Thus, $p|L(\ba)$.
Now we show that $5^2 | L(\ba)$.  Let $S_b=\{a_i : 5^b \| a_i \}$ for $b\ge 0$.  Then $|S_0|\ge 1$.
If $|S_b|\ge 1$ for some $b\ge 2$, then there are $i,j$ with
$5^2|a_i$ and $5\nmid a_j$, and then $5^2|\kappa(a_i,a_j)$.
Otherwise, we have $|S_b|\ge 21$ for some $b\in\{0,1\}$.
By the pigeonhole principle, there is $i\ne j$ with
$5^b\| a_i, 5^b \| a_j$ and $5^{b+2}|(a_i-a_j)$.  This also implies that
$5^2| \kappa(a_i,a_j)$.
Simlarly, let $T_b=\{a_i : 3^b\| a_i \}$.  Then we have 
either  $|T_b|\ge 1$ for some $b\ge 2$, or $|T_i| \ge 7$ for 
some $i\in\{0,1\}$.  Either way, $3^2|L(\ba)$.
Let $U_b=\{ a_i : 2^b\| a_i \}$.  Then either
$|U_b|\ge 1$ for some $b\ge 4$ or $|U_b|\ge 9$ for some
$b\in \{0,1,2,3\}$.  Either way, $2^4 | L(\ba)$. 
It is easy to construct $\ba=(a_1,\ldots,a_{50})$ such that
$3^3 \nmid L(\ba)$ and $2^5 \nmid L(\ba)$.  However, such 
constructions seem to always produce $q|L(\ba)$ for
some prime $q>50$.

\medskip

\textbf{Remark 2.}
We believe that 3570 is the smallest number that can be produced 
for Theorem \ref{thm: main} (b) using $\DHL^*(50;2)$,
however we do not have a proof of this.
We did perform a sophisticated search based on graph algorithms.
We limited our
search to sets of numbers composed only of the primes 2,3,5,
as additional prime factors  always seem to produce a
value of $\kappa(a,b)$ larger than 3570.
  For a given finite set of 
integers $\{b_1,\ldots,b_r\}$, the problem of minimizing $\max_{i,j\in I} \kappa(b_i,b_j)$
over all 50-element subsets $I \subset \{1,\ldots,r\}$, is equivalent to
that of finding the largest clique in a graph.  Indeed, take 
a threshold value $t$,
vertex set
$\{1,\ldots,r\}$ and draw an edge from $i$ to $j$ if $\kappa(b_i,b_j)\le t$.  Then the graph has a 50-element clique if and only
if $\max_{i,j\in I} \kappa(b_i,b_j) \le t$.
Using the Sage routing \texttt{clique\_number()} with $t=3569$
and  $\{b_1,\ldots,b_r\}$ being the smallest 800 numbers composed only 
of primes 2,3,5 (the largest being 12754584), we found that the largest clique has size 49.

\medskip

\textbf{Remark 3.}
The author recently learned that Sungjin Kim \cite{kim}
proved weaker statements in the direction of
Theorem \ref{thm: main}.
He used Lemma \ref{lem:ktup} to show that $\cS_k$ holds for
some $k\in \{B,2B,\ldots,50B\}$, with
$B=\prod_{p\le 50} p$, and
that the set of $k$ for which $\cS_k$ holds
has counting function $\gg\log\log x$.

Andrew Granville informed the author that Chris Orr has independently
discovered Lemma \ref{PS} and the conclusion that there is some $k$
for which $\cS_k$ holds.  The author has not seen the paper.

\begin{proof}[Proof of Theorem \ref{thm:mtup}]
Let $m\ge 2$, and let $k$ be such that
$\DHL^*(k;m)$ holds.  The existence of such $k$ 
follows from Lemma \ref{lem:ktup}.
Consider any set 
$\{a_1,a_2,\ldots,a_k\}$ of $k$ positive integers.  
Then
there are $1\le i_1 < i_2 <\cdots < i_m\le k$ 
such that for infinitely many $r$,
the $m$ numbers $a_{i_1} r +1,\ldots,a_{i_m} r + 1$ are all prime.
Let $r$ be such a number.
Define
\[
h_j = \frac{(a_{i_1}\cdots a_{i_m})^2}{a_{i_j}} \qquad (1\le j\le m).
\]
Let $\ell\in \NN$ and set
 $n=\ell (a_{i_1}\cdots a_{i_m})^2 r$.  Then, since
$a_{i_j}|h_j$ for all $j$, it follows that
for any $j$,
\[
\phi(n+\ell h_j)=\phi(\ell h_j(a_{i_j}r+1))=
\phi(\ell h_j) a_{i_j} r=\phi(\ell h_j a_{i_j}) r.\qedhere
\]
\end{proof}

\begin{proof}[Proof of Theorem \ref{thm:EH}]
Same as the proof of Theorem \ref{thm: main} (a), but take $\{a_1,a_2,a_3,a_4,a_5\}=\{1,2,3,4,6\}$ if $\DHL^*(5;2)$ holds
and $\{a_1,\ldots,a_4\}=\{1,2,3,4\}$ if $\DHL^*(4;2)$ holds.
\end{proof}

\begin{proof}[Proof of Theorem \ref{thm:sigma2}]
Assume $\DHL^*(t;m)$ and suppose that $a_1,\ldots,a_t$ satisfy \eqref{sigma-frac}.
Put $A=\lcm[a_1,\ldots,a_t]$ and for each $i$ define
$b_i = A/a_i$.
By Lemma \ref{lem:ktup} applied to the collection of 
linear forms $b_in-1$, $1\le i\le t$, there exist
$i_1,\ldots,i_m$ such that for infinitely many $r\in \NN$,
the $m$ numbers $b_{i_j}r-1$ are all prime.
Let $r>A$ be such a number,
and let $\ell\in \NN$ such that $(\ell,A)=1$ (this holds for a
positive proportion of all $\ell$).
Let 
\[
t_j = \ell a_{i_j} (b_{i_j}r-1) = A\ell r - \ell a_{i_j} \qquad (1\le j\le m).
\]
By \eqref{sigma-frac}, for every $j$ we have
\[
\sigma(t_j) = \sigma(\ell) \sigma(a_{i_j}) b_{i_j}r = y \sigma(\ell) Ar.\qedhere
\]
\end{proof}

\appendix

\section{Details of $\DHL^*(50;2)$}

The  statement $\DHL^*(50;2)$
 follows immediately from an
appropriate generalization of Theorem 26 in \cite{polymath8b} to arbitrary linear forms
$a_i n+b_i$, together with the calculations
given in Theorems 27 (Theorem 27 requires no modification
as it concerns the existence of smooth functions in polytopes with
certain properties).
These in turn depend on a generalization of the
simple combinatorial Lemma 18 (the modification is trivial),
plus generalizations of
 Theorem 19 (i) (but not part (ii)!) and Theorem 20 (i) (again, only part (i) not part (ii)).  The proof of the latter two Theorems is given
 on pages 17--23 in \cite{polymath8b}, the one
 needs only replace the forms $n+h_i$ (in the notation
 of \cite{polymath8b}) with the forms $a_in+b_i$
 throughout.  With slightly different notation,
 details may be found in Section 7 of the author's 
 lecture notes \cite{Fsieve}.

%%%%%%%%%%%%%%%%%%%%%%%%%

\end{document}